\documentclass[11pt,reqno]{amsart}

\usepackage[T1]{fontenc}
\usepackage[utf8]{inputenc}
\usepackage{lmodern}
\usepackage{microtype}

\usepackage{amsmath,amssymb,amsthm,mathtools}
\usepackage{enumitem}
\usepackage{xcolor}
\usepackage{hyperref}

\hypersetup{
  colorlinks=true,
  linkcolor=blue!55!black,
  citecolor=green!40!black,
  urlcolor=blue!65!black,
  pdftitle={Line-Filtered Rank-Two ACM Bundles on Abelian Varieties of Picard Rank One},
  pdfauthor={Soham Mondal}
}

\numberwithin{equation}{section}

\theoremstyle{plain}
\newtheorem{theorem}{Theorem}[section]
\newtheorem{proposition}[theorem]{Proposition}
\newtheorem{lemma}[theorem]{Lemma}
\newtheorem{corollary}[theorem]{Corollary}

\theoremstyle{definition}
\newtheorem{definition}[theorem]{Definition}

\theoremstyle{remark}
\newtheorem{remark}[theorem]{Remark}

\newcommand{\cO}{\mathcal O}

\newcommand{\cI}{\mathcal I}

\newcommand{\Pic}{\operatorname{Pic}}
\newcommand{\NS}{\operatorname{NS}}

\newcommand{\Ext}{\operatorname{Ext}}

\newcommand{\sheafHom}{\mathcal Hom}
\newcommand{\sheafExt}{\mathcal Ext}

\newcommand{\CC}{\mathbb C}
\newcommand{\ZZ}{\mathbb Z}

\title[Line-Filtered Rank-Two ACM Bundles]
{Line-Filtered Rank-Two ACM Bundles on Abelian Varieties of Picard Rank One}

\author{Soham Mondal}

\address{Department of Mathematics, IISER Berhampur}
\email{getsoham1@gmail.com}

\subjclass[2020]{14K05, 14J60, 14F06, 13C14}

\keywords{
abelian varieties,
arithmetically Cohen--Macaulay bundles,
rank-two vector bundles,
extensions of line bundles,
Ulrich bundles
}

\begin{document}

\begin{abstract}
Let $A$ be a complex abelian variety of dimension $g\ge 2$ with $\NS(A)\cong\ZZ$
generated by an ample class $L$. We classify, up to twist by powers of $L$, the
rank-two arithmetically Cohen--Macaulay bundles on $A$  with empty defect: they are sums of two ACM line bundles or nonsplit
self-extensions of a nontrivial degree-zero line bundle. We also show none is
Ulrich.
\end{abstract}

\maketitle

\section{Introduction}

Let $(A,L)$ be a polarized smooth projective variety of dimension $n$. A vector
bundle $E$ on $A$ is \emph{arithmetically Cohen--Macaulay} (ACM) with respect to
$L$ if $H^i(A,E\otimes L^t)=0$ for all $t\in\ZZ$ and all $1\le i\le n-1$. ACM and
Ulrich bundles link projective geometry, homological algebra, and the theory of
maximal Cohen--Macaulay modules, and their classification is known in only a few
cases.

Abelian varieties are a natural testing ground: the positive-dimensional group
$\Pic^0(A)$ supplies an abundance of ACM line bundles, since every nontrivial
degree-zero line bundle is ACM with respect to every polarization. On an abelian
surface $A$ with $\operatorname{Num}(A)\cong\ZZ$, Ballico~\cite{Ballico} classified
rank-two totally ACM bundles as extensions of two numerically trivial, nontrivial
line bundles, and Beauville~\cite{Beauville} showed that every abelian surface
carries a rank-two Ulrich bundle. Both results are two-dimensional.

In this paper we work in arbitrary dimension. Let $A$ be a complex abelian variety
of dimension $g\ge 2$ and $L$ an ample line bundle whose class generates
$\NS(A)\cong\ZZ$. We call a rank-two bundle $E$ \emph{line-filtered} if it admits an
exact sequence $0\to N\to E\to M\to 0$ with $N,M$ line bundles; equivalently, $E$
has a saturated line subbundle with empty defect. We do not claim that every
rank-two ACM bundle is line-filtered, and this hypothesis is essential to our
arguments.

Our main result is the following.

\begin{theorem}[see Theorem~\ref{thm:main-classification}]
Let $E$ be a line-filtered rank-two vector bundle on $A$. Then $E$ is ACM if and
only if, up to twist by a power of $L$, it is either a direct sum of two ACM line
bundles or a nonsplit self-extension of a nontrivial line bundle
$P\in\Pic^0(A)$. The decomposable bundles fall into two families distinguished by
the parity of the numerical class of $\det(E)$.
\end{theorem}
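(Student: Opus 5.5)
The plan is to reduce everything to cohomology of line bundles on $A$ and then split into cases according to the numerical classes of the two line bundles in the filtration. Since $\NS(A)\cong\ZZ L$, every line bundle is of the form $L^a\otimes P$ with $a\in\ZZ$ and $P\in\Pic^0(A)$. I will use three standard facts about cohomology on $A$:
\begin{itemize}[leftmargin=*]
\item for $a>0$, $L^a\otimes P$ has cohomology only in degree $0$, and for $a<0$ only in degree $g$ (by Mumford's index theorem);
\item a nontrivial $P\in\Pic^0(A)$ has no cohomology at all;
\item $H^i(A,\cO_A)\cong\Lambda^iH^1(A,\cO_A)$, which is nonzero for every $0\le i\le g$.
\end{itemize}
From these, a line bundle $M$ is ACM if and only if $M\not\cong L^a$ for every $a$. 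I will record this first as a preliminary lemma.

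Let $0\to N\to E\to M\to 0$ be the filtration, with $N\equiv L^a$ and $M\equiv L^b$. The first key step is to control the extension group $\Ext^1(M,N)=H^1(A,N\otimes M^{-1})$, where $N\otimes M^{-1}\equiv L^{a-b}$. Since $g\ge 2$, the three facts above give that this group vanishes whenever $a\ne b$, and also when $a=b$ but $N\not\cong M$. So the sequence splits unless $N\cong M$. In the split case $E=N\oplus M$, and since cohomology is additive, $E$ is ACM if and only if both $N$ and $M$ are ACM line bundles. Twisting by a power of $L$ changes $c_1(\det E)=(a+b)L$ by an even multiple of $L$. We can therefore normalize $a+b\in\{0,1\}$, which gives the two families distinguished by parity.

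It remains to treat a nonsplit extension $0\to M\to E\to M\to 0$. Twist so that $M=P\in\Pic^0(A)$; such a nonsplit extension exists because $\Ext^1(P,P)=H^1(\cO_A)\neq0$. Tensoring with $L^t$ gives $0\to P\otimes L^t\to E\otimes L^t\to P\otimes L^t\to 0$.
\begin{itemize}[leftmargin=*]
\item For $t\ne0$, both outer terms have no intermediate cohomology, so neither does $E\otimes L^t$.
\item For $t=0$ and $P$ nontrivial, the outer terms have no cohomology at all. Hence $E$ is ACM; this gives the self-extension family.
\end{itemize}
Conversely, this step also shows that the nonsplit self-extension of a line bundle numerically equivalent to $L^a$ is ACM only if that line bundle is not $L^a$ itself.

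The main obstacle is to show that when $P=\cO_A$ (after twisting), the nonsplit extension $0\to\cO_A\to F\to\cO_A\to0$ is \emph{not} ACM. Let $e\in H^1(\cO_A)$ be the nonzero extension class. The connecting maps $H^{i}(\cO_A)\to H^{i+1}(\cO_A)$ are cup product with $e$, that is, $\omega\mapsto e\wedge\omega$ on $\Lambda^\bullet H^1(\cO_A)$. The long exact sequence then gives a surjection
\[
H^i(F)\twoheadrightarrow \ker\bigl(e\wedge-\colon\Lambda^i\to\Lambda^{i+1}\bigr).
\]
This kernel contains $e\wedge\Lambda^{i-1}$, which is nonzero for $1\le i\le g$ because $\dim H^1(\cO_A)=g$. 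So $H^i(F)\neq0$ for $1\le i\le g-1$, and $F$ is not ACM.

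Putting the cases together gives both directions of the classification: every ACM line-filtered $E$ is, up to twist, either a sum of two ACM line bundles or a nonsplit self-extension of a nontrivial $P\in\Pic^0(A)$, and conversely all of these are ACM.
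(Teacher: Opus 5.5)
Your proposal is correct and follows the paper's proof essentially step for step: vanishing of $\Ext^1$ forces splitting unless $N\cong M$, the split case reduces to the line-bundle criterion with $a+b$ normalized to $0$ or $1$, and a nonsplit self-extension of a nontrivial $P\in\Pic^0(A)$ is ACM by the long exact sequence. The only real difference is the case $P\simeq\cO_A$: the paper skips the identification of connecting maps with cup product and just notes that $\delta\colon H^0(A,\cO_A)\to H^1(A,\cO_A)$ cannot be surjective since $1<g$, so $H^1(A,F)\neq 0$, whereas your cup-product argument is also valid and gives the stronger conclusion $H^i(A,F)\neq 0$ for all $1\le i\le g-1$; separately, the paper proves that the self-extensions are indecomposable, which makes the types in Theorem~\ref{thm:main-classification} mutually exclusive but is not needed for the statement as phrased here.
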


For $g=2$ this recovers Ballico's classification; the passage to $g\ge 3$ and the
parity refinement of $\det(E)$, invisible on surfaces, are new. Two further results
frame the classification. First, for an \emph{arbitrary} saturated line subbundle
of a rank-two bundle, the associated defect scheme is either empty or a local
complete intersection of pure codimension two (Theoem~\ref{thm:pure-codimension-two}).
Second, no line-filtered rank-two bundle on $A$ is Ulrich
(Theorem~\ref{thm:no-line-filtered-ulrich}); combined with~\cite{Beauville}, this shows
that rank-two Ulrich bundles necessarily arise from a nonempty codimension-two
defect, precisely the case our classification does not cover.

The paper is organized as follows. Section~\ref{cohomology} records the cohomology of line bundles
on $A$. Section~\ref{saturated} studies saturated line subbundles and their defect schemes.
Section~\ref{extgrp} computes the relevant extension groups. Section~\ref{classification} proves the main
classification, and Section~\ref{Ulrich} treats the Ulrich condition. Section~\ref{comp} compares our results with the work of Ballico and Beauville on abelian surfaces.

\section{Cohomology of line bundles on abelian varieties}\label{cohomology}

Throughout the paper, \(A\) denotes a complex abelian variety of
dimension \(g\geq 2\), and \(L\) denotes an ample line bundle on \(A\).

\begin{definition}
A coherent sheaf \(F\) on \(A\) is \emph{ACM with respect to \(L\)} if

\[
H^i(A,F\otimes L^t)=0
\]
for every \(t\in\ZZ\) and every \(1\leq i\leq g-1\).
\end{definition}

\begin{lemma}\label{lem:structure-sheaf-cohomology}
For every \(0\leq i\leq g\), one has

\[
H^i(A,\cO_A)
\cong
\bigwedge^i H^1(A,\cO_A).
\]
In particular,

\[
h^i(A,\cO_A)=\binom{g}{i},
\]
so \(H^i(A,\cO_A)\neq 0\) for every \(0\leq i\leq g\).
Moreover,

\[
\chi(A,\cO_A)=0.
\]
\end{lemma}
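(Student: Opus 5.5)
We plan to prove this standard fact about abelian varieties using the complex-analytic description $A=V/\Lambda$, where $V\cong\CC^g$ and $\Lambda\subset V$ is a lattice. The approach has three steps.

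\textbf{Step 1: reduce to the cohomology of a torus.} By Dolbeault's theorem, $H^i(A,\cO_A)\cong H^{0,i}_{\bar\partial}(A)$. By Hodge theory on the compact Kähler manifold $A$, every Dolbeault class has a unique harmonic representative for the flat metric induced from a Hermitian inner product on $V$.

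\textbf{Step 2: identify the harmonic forms.} For the flat metric, the Laplacian acts coefficientwise on forms written in the global frame $d\bar z_{j_1}\wedge\cdots\wedge d\bar z_{j_i}$. A form is therefore harmonic exactly when its coefficients are harmonic functions on the compact manifold $A$, hence constant. So the harmonic $(0,i)$-forms are the translation-invariant ones, and they form the space $\bigwedge^i\overline{V}^{\,*}$. Taking $i=1$ gives $H^1(A,\cO_A)\cong\overline V^{\,*}$, which has dimension $g$. The wedge product of invariant forms is again invariant, so the cup product map $\bigwedge^i H^1(A,\cO_A)\to H^i(A,\cO_A)$ carries $\bigwedge^i\overline V^{\,*}$ isomorphically onto the harmonic space. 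This gives the isomorphism in the statement.

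As an algebraic alternative, one can use Mumford's argument. The Hopf algebra $H^*(A,\cO_A)$ is graded-commutative and generated in degree one, and $h^1(A,\cO_A)=\dim T_0\Pic^0(A)=g$. Since $h^g(A,\cO_A)=h^0(A,\omega_A)=1$ by Serre duality and $\omega_A\cong\cO_A$, this forces $H^*(A,\cO_A)\cong\bigwedge^* H^1(A,\cO_A)$. I expect the step needing care to be the identification of harmonic forms with invariant ones, or in the algebraic version, the structure theorem for Hopf algebras.

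\textbf{Step 3: the numerical consequences.} The isomorphism gives $h^i(A,\cO_A)=\binom{g}{i}$, which is nonzero for every $0\le i\le g$. Finally,
\[
\chi(A,\cO_A)=\sum_{i=0}^g(-1)^i\binom gi=(1-1)^g=0,
\]
using $g\ge 1$.
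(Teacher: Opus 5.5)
Your proof is correct and is essentially the standard argument the paper defers to; the paper's own proof is only a citation to Birkenhake--Lange and to Mumford, Chapter~III. On the flat torus $V/\Lambda$ the harmonic representatives are the constant-coefficient forms, and compatibility of the Dolbeault isomorphism with cup product then gives $H^i(A,\cO_A)\cong\bigwedge^i\overline V^{\,*}\cong\bigwedge^i H^1(A,\cO_A)$ together with all the numerical consequences.

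One small caveat applies only to your algebraic alternative. That $H^*(A,\cO_A)$ is generated in degree one is not an available input. It is exactly what Mumford deduces from Borel's structure theorem for graded-commutative Hopf algebras together with $\dim H^1(A,\cO_A)\geq g$ and $H^i(A,\cO_A)=0$ for $i>g$. As phrased, that sketch therefore assumes its key step, whereas your analytic route is self-contained.
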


\begin{proof}
This is a standard cohomological property of complex abelian varieties. The algebra of coherent cohomology of the structure sheaf is canonically isomorphic to the exterior algebra on $H^{1}(A,\mathcal{O}_{A})$. We refer the reader to Birkenhake and Lange \cite[Section 1.2]{BirkenhakeLange} or Mumford \cite[Chapter III]{Mumford} for details.
\end{proof}

\begin{lemma}\label{lem:pic-zero-vanishing}
Let \(P\in\Pic^0(A)\). If \(P\not\simeq\cO_A\), then

\[
H^i(A,P)=0
\]
for every \(0\leq i\leq g\).
\end{lemma}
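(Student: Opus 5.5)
The plan is to show first that $H^0(A,P)=0$, then $H^g(A,P)=0$ by Serre duality, and finally to handle the intermediate degrees $1\le i\le g-1$ by a translation-invariance argument based on the Künneth formula. This is the classical approach of Mumford \cite[Chapter III, \S 8]{Mumford}.

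\emph{Degree zero.} Suppose $0\neq s\in H^0(A,P)$. Since $P\in\Pic^0(A)$, we have $P^{-1}\cong(-1)^*P$, so $(-1)^*s$ is a nonzero section of $P^{-1}$. The product $s\otimes(-1)^*s$ is then a nonzero section of $P\otimes P^{-1}\cong\cO_A$, hence a nonzero constant, because $A$ is projective and integral. It follows that $s$ vanishes nowhere, so $s$ trivializes $P$ and $P\cong\cO_A$, a contradiction.

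\emph{Top degree.} Since $\omega_A\cong\cO_A$, Serre duality gives $H^g(A,P)\cong H^0(A,P^{-1})^\vee$. Because $P^{-1}$ is also a nontrivial element of $\Pic^0(A)$, this group vanishes by the previous step.

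\emph{Intermediate degrees.} Let $k$ be the smallest index with $H^k(A,P)\neq 0$, and suppose such a $k$ exists; we derive a contradiction.
\begin{enumerate}[label=(\roman*)]
\item Let $m\colon A\times A\to A$ be the group law and $p_1,p_2$ the projections. The theorem of the square for $P\in\Pic^0(A)$ gives
\[
m^*P\cong p_1^*P\otimes p_2^*P .
\]
\item The composite $A\xrightarrow{(\mathrm{id},0)}A\times A\xrightarrow{m}A$ is the identity. Hence $H^k(A,P)\to H^k(A\times A,m^*P)\to H^k(A,P)$ is the identity map, so $H^k(A,P)$ injects into $H^k(A\times A,m^*P)$.
\item By (i) and the Künneth formula,
\[
H^k(A\times A,m^*P)\cong\bigoplus_{i+j=k}H^i(A,P)\otimes H^j(A,P).
\]
\item By minimality of $k$, each summand has $i<k$ or $j<k$, unless $i$ or $j$ exceeds $k$, which forces the other index below $k$. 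In every case one tensor factor vanishes, so the right-hand side is $0$ (when $k\ge 1$; the case $k=0$ was already excluded). This contradicts the injectivity in (ii).
\end{enumerate}
This disposes of all degrees $1\le i\le g-1$.

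The main obstacle is step (i), the identity $m^*P\cong p_1^*P\otimes p_2^*P$. I would derive it from the seesaw principle. The restriction of $m^*P$ to $A\times\{y\}$ is $t_y^*P\cong P$, since $P\in\Pic^0(A)$ means exactly that $P$ is translation invariant. Its restriction to $\{0\}\times A$ is $P$. Seesaw then gives the isomorphism.

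As an alternative check, in characteristic zero the statement also follows from the description $H^i(A,P)\cong H^{0,i}$ of a flat unitary line bundle with nontrivial character. In that setting harmonic forms with coefficients in a nontrivial unitary local system on a torus vanish, which gives another route to the same conclusion.
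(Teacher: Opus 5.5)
Your proof is correct. It is Mumford's classical argument \cite[\S 8]{Mumford}: a nonzero section forces triviality via $(-1)^*P\cong P^{-1}$, and a minimal nonvanishing degree $k\ge 1$ is then killed by the retraction $A\to A\times A\xrightarrow{m}A$ together with K\"unneth applied to $m^*P\cong p_1^*P\otimes p_2^*P$.

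The paper gives no argument of its own for this lemma; it simply cites \cite[Theorem 3.1]{MondalBarik2026}. Your route therefore makes the lemma self-contained and characteristic-free, at the cost of a page of standard abelian-variety theory. It needs only seesaw, K\"unneth, and the fact that elements of $\Pic^0(A)$ are translation invariant. The paper defines $\Pic^0(A)$ as the kernel of $\Pic(A)\to\NS(A)$, so you should cite this translation-invariance theorem explicitly, since it is exactly what feeds your step (i).

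A few cosmetic corrections:
\begin{enumerate}[label=(\alph*)]
\item The identity $m^*P\cong p_1^*P\otimes p_2^*P$ is not the theorem of the square; it is the standard characterization of $\Pic^0$. Your seesaw derivation of it is correct.
\item You use $(-1)^*P\cong P^{-1}$ in the degree-zero step before establishing (i). Reorder, noting that it follows from (i) by pulling back along $(\mathrm{id},-1)\colon A\to A\times A$, since $m\circ(\mathrm{id},-1)$ is constant.
\item In (iv), the clause about an index exceeding $k$ is vacuous. The point is simply that $i+j=k\ge 1$ forces $\min(i,j)<k$, so one tensor factor vanishes.
\item The separate Serre-duality step for $H^g$ is redundant, because the minimal-$k$ argument already covers $k=g$.
\end{enumerate}
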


\begin{proof}
For a proof see \cite[Theorem 3.1]{MondalBarik2026}.
\end{proof}

\begin{lemma}\label{lem:ample-antiample}

Let $P \in \operatorname{Pic}^0(A)$ and $m \in \mathbb{Z}$.
\begin{enumerate}
\item \label{item:positive-vanishing} If $m > 0$, then $H^i(A, L^m \otimes P) = 0$ for every $i > 0$.
\item \label{item:negative-vanishing} If $m < 0$, then $H^i(A, L^m \otimes P) = 0$ for every $0 \leq i < g$.
\item \label{item:positive-h0} If $m > 0$, then $h^0(A, L^m \otimes P) = \chi(A, L^m) = \frac{m^g L^g}{g!} > 0$.
\item \label{item:negative-hg} If $m < 0$, then $H^g(A, L^m \otimes P) \neq 0$.
\end{enumerate}
\end{lemma}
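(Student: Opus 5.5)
The plan is to reduce everything to the standard theory of nondegenerate line bundles on abelian varieties: Mumford's vanishing theorem, the Riemann--Roch formula $\chi(M)=(M^g)/g!$, and Serre duality with trivial canonical bundle $\omega_A\cong\cO_A$. I treat \ref{item:positive-vanishing} and \ref{item:positive-h0} first, then deduce \ref{item:negative-vanishing} and \ref{item:negative-hg} by duality.

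For $m>0$, the bundle $L^m\otimes P$ is ample: its class in $\NS(A)$ equals that of $L^m$, since $P\in\Pic^0(A)$ is algebraically (hence numerically) trivial, and ampleness depends only on the numerical class. Item \ref{item:positive-vanishing} is then the standard vanishing theorem for ample line bundles on abelian varieties (Mumford, \S16, or Birkenhake--Lange, Prop.~3.5.x). The index of a nondegenerate line bundle $M$ is the number of negative eigenvalues of its Hermitian form, and $H^i(M)$ vanishes except in degree equal to the index. Ample bundles have index $0$. Alternatively one can argue by Kodaira vanishing: $\omega_A\cong\cO_A$, so $L^m\otimes P=\omega_A\otimes(L^m\otimes P)$ with $L^m\otimes P$ ample. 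For \ref{item:positive-h0}, part \ref{item:positive-vanishing} gives $h^0=\chi(L^m\otimes P)$. The Riemann--Roch theorem on abelian varieties gives $\chi(M)=(M^g)/g!$. Since intersection numbers depend only on numerical classes, this equals $(L^m)^g/g!=m^g(L^g)/g!$, which is positive because $L$ is ample.

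For $m<0$, I apply Serre duality with $\omega_A\cong\cO_A$:
\[
H^i(A,L^m\otimes P)\cong H^{g-i}(A,L^{-m}\otimes P^{-1})^\vee.
\]
Here $-m>0$ and $P^{-1}\in\Pic^0(A)$. So \ref{item:positive-vanishing} applied to $L^{-m}\otimes P^{-1}$ kills $H^{g-i}$ for $g-i>0$, i.e.\ for $i<g$, which gives \ref{item:negative-vanishing}. Likewise \ref{item:positive-h0} gives $h^g(L^m\otimes P)=h^0(L^{-m}\otimes P^{-1})=|m|^gL^g/g!>0$, which gives \ref{item:negative-hg}.

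The only step needing genuine input is \ref{item:positive-vanishing}, the vanishing of higher cohomology for ample bundles. I would cite Kodaira vanishing together with triviality of $\omega_A$, or Mumford's index theorem, rather than reprove it. A minor point to state explicitly is that twisting by $P\in\Pic^0(A)$ does not change ampleness or intersection numbers, because $P$ is numerically trivial.
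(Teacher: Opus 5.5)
Your proposal is correct and matches the paper's proof essentially step for step. Both argue that $L^m\otimes P$ is ample because it is numerically equivalent to $L^m$, use Kodaira vanishing with $\omega_A\cong\mathcal O_A$ for (i), use Riemann--Roch ($\chi=m^gL^g/g!$) together with (i) for (iii), and use Serre duality to deduce (ii) and (iv) from the positive case.
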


\begin{proof}
If \(m>0\), then \(L^m\otimes P\) is ample because it is numerically
equivalent to \(L^m\). Since

\[
\omega_A\simeq\cO_A,
\]
Kodaira vanishing gives

\[
H^i(A,L^m\otimes P)=0
\]
for every \(i>0\).

Hirzebruch--Riemann--Roch gives

\[
\chi(A,L^m\otimes P)
=
\frac{c_1(L^m)^g}{g!}
=
\frac{m^gL^g}{g!}.
\]
Combining this with the higher-cohomology vanishing proves part
\textup{(iii)}.

Suppose now that \(m<0\). By Serre duality and the triviality of
\(\omega_A\),

\[
H^i(A,L^m\otimes P)
\cong
H^{g-i}(A,L^{-m}\otimes P^{-1})^\vee.
\]
The line bundle \(L^{-m}\otimes P^{-1}\) is ample. Hence its positive
degree cohomology vanishes. If \(i<g\), then \(g-i>0\), proving
part \textup{(ii)}.

For \(i=g\), Serre duality gives

\[
H^g(A,L^m\otimes P)
\cong
H^0(A,L^{-m}\otimes P^{-1})^\vee.
\]
The latter space is nonzero by part \textup{(iii)}.
\end{proof}

\begin{proposition}\label{prop:ACM-line-bundles}
Assume that

\[
\NS(A)=\ZZ[L].
\]
Let \(m\in\ZZ\) and \(P\in\Pic^0(A)\). Then

\[
L^m\otimes P
\]
is ACM with respect to \(L\) if and only if

\[
P\not\simeq\cO_A.
\]
\end{proposition}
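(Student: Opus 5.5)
The plan is to prove both directions by computing the cohomology of every twist $L^m\otimes P\otimes L^t=L^{m+t}\otimes P$ with the lemmas above. The ACM condition for $L^m\otimes P$ says that $H^i(A,L^{k}\otimes P)=0$ for all $k\in\ZZ$ and all $1\le i\le g-1$, where $k=m+t$ ranges over all of $\ZZ$ as $t$ does. So the statement reduces to a question about the single family $\{L^k\otimes P\}_{k\in\ZZ}$, and $m$ plays no role. I split into the three cases $k>0$, $k<0$ and $k=0$.

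For $k>0$, part~\ref{item:positive-vanishing} of Lemma~\ref{lem:ample-antiample} gives $H^i(A,L^k\otimes P)=0$ for all $i>0$, so in particular for $1\le i\le g-1$. For $k<0$, part~\ref{item:negative-vanishing} gives vanishing for $0\le i<g$, which again covers $1\le i\le g-1$. Thus all nonzero twists are automatically harmless, whatever $P$ is, and the ACM property is decided entirely by the twist $k=0$, namely by the cohomology of $P$ itself.

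If $P\not\simeq\cO_A$, then Lemma~\ref{lem:pic-zero-vanishing} says that all cohomology of $P$ vanishes, so $L^m\otimes P$ is ACM. Conversely, if $P\simeq\cO_A$, take $t=-m$ to get $H^1(A,\cO_A)$. By Lemma~\ref{lem:structure-sheaf-cohomology} this has dimension $g\ge 1$, and $1\le 1\le g-1$ holds because $g\ge 2$. So $L^m$ is not ACM.

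There is no real obstacle here. The only point needing care is the dimension bookkeeping: the range $1\le i\le g-1$ must be nonempty, which is where $g\ge2$ is used. One should also note that the hypothesis $\NS(A)=\ZZ[L]$ is not needed for this computation itself. Its role is to ensure that every line bundle has the form $L^m\otimes P$, so the proposition covers all line bundles on $A$.
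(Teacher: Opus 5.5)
Your proposal is correct and follows the paper's proof essentially verbatim. It reduces to the twists $L^{s}\otimes P$, disposes of $s\neq 0$ via Lemma~\ref{lem:ample-antiample}, settles $s=0$ by Lemma~\ref{lem:pic-zero-vanishing}, and for the converse uses the twist $t=-m$ with $H^1(A,\cO_A)\neq 0$. Your side remark that the hypothesis $\NS(A)=\ZZ[L]$ plays no role in the argument itself is also accurate.
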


\begin{proof}
Set

\[
F=L^m\otimes P.
\]
For every \(t\in\ZZ\),

\[
F\otimes L^t=L^{m+t}\otimes P.
\]
As \(t\) varies over \(\ZZ\), the exponent \(s=m+t\) varies over all
integers.

Suppose first that \(P\not\simeq\cO_A\). If \(s>0\), then

\[
H^i(A,L^s\otimes P)=0
\qquad (i>0)
\]
by Lemma \ref{lem:ample-antiample}. If \(s<0\), then

\[
H^i(A,L^s\otimes P)=0
\qquad (i<g).
\]
If \(s=0\), then

\[
H^i(A,P)=0
\qquad\text{for all }i
\]
by Lemma~\ref{lem:pic-zero-vanishing}. Therefore

\[
H^i(A,F\otimes L^t)=0
\]
for every \(t\in\ZZ\) and every \(1\leq i\leq g-1\). Thus \(F\) is
ACM.

Conversely, suppose that \(P\simeq\cO_A\). Taking \(t=-m\), we obtain

\[
F\otimes L^{-m}\simeq\cO_A.
\]
By Lemma~\ref{lem:structure-sheaf-cohomology},

\[
H^i(A,\cO_A)\neq0
\qquad (1\leq i\leq g-1).
\]
Therefore \(F\) is not ACM.
\end{proof}

\section{SATURATED LINE SUBBUNDLES AND DEFECT SCHEMES}\label{saturated}


We next study the existence and structure of a saturated inclusion of a line bundle into a rank-two vector bundle.

\begin{definition}\label{def:saturated}
Let $E$ be a rank-two vector bundle on a smooth variety $A$. A line subbundle
\[ N \hookrightarrow E \]
is called saturated if the quotient $E/N$ is torsion-free.
\end{definition}

 Before analyzing the defect scheme, we record that every rank-two
vector bundle admits a saturated rank-one filtration. On a smooth
variety, the saturated rank-one subsheaf is a line bundle, and the
rank-one torsion-free quotient is an ideal sheaf tensored with a line
bundle.

\begin{lemma}[Saturated filtration of a rank-two vector bundle]
\label{lem:filtration_existence}
Let $A$ be a smooth integral projective variety, let $L$ be an ample
line bundle whose class generates $\operatorname{NS}(A)$, and let $E$
be a rank-two vector bundle on $A$. Then there exists a saturated exact
sequence

\[
0 \longrightarrow N \longrightarrow E
  \longrightarrow \mathcal{I}_{Z}\otimes M
  \longrightarrow 0,
\]
where $N$ and $M$ are line bundles on $A$ and
$Z\subset A$ is a closed subscheme of codimension at least two,
possibly empty.

Moreover, there exist unique integers $a,b\in\mathbb{Z}$ and unique
line bundles $P,Q\in\operatorname{Pic}^{0}(A)$ such that

\[
N \simeq L^{a}\otimes P
\qquad\text{and}\qquad
M \simeq L^{b}\otimes Q.
\]
\end{lemma}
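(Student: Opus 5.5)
The proof has two independent parts: existence of the saturated sequence, via saturating a rank-one subsheaf produced by Serre's theorem, and uniqueness of the decomposition, via the exact sequence $0\to\Pic^0(A)\to\Pic(A)\to\NS(A)\to0$.

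\emph{Existence.} By Serre's theorem, $E\otimes L^k$ is globally generated for $k\gg0$. A nonzero section therefore gives a nonzero map $L^{-k}\to E$, whose image $F\subset E$ is a rank-one subsheaf. Let $T\subset E/F$ be the torsion subsheaf. Define $N\subset E$ as the preimage of $T$ under $E\to E/F$. Then $E/N\cong (E/F)/T$ is torsion-free of rank one, so $N\hookrightarrow E$ is saturated.

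\begin{enumerate}
\item \emph{$N$ is a line bundle.} A saturated subsheaf of a reflexive (here locally free) sheaf with torsion-free quotient is reflexive. To see this, compare $N\to N^{\vee\vee}\subset E^{\vee\vee}=E$. The cokernel $N^{\vee\vee}/N$ is supported in codimension $\ge2$ and injects into the torsion-free sheaf $E/N$, so it vanishes. A rank-one reflexive sheaf on a smooth variety is invertible.
\item \emph{The quotient has the stated form.} Set $\mathcal Q=E/N$, which is torsion-free of rank one. The natural map $\mathcal Q\to\mathcal Q^{\vee\vee}$ is injective because $\mathcal Q$ is torsion-free. Its target $M:=\mathcal Q^{\vee\vee}$ is rank-one reflexive, hence a line bundle. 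Therefore $\mathcal Q\cong \cI_Z\otimes M$, where $\cI_Z:=\mathcal Q\otimes M^{-1}\subset\cO_A$ is an ideal sheaf.
\item \emph{$Z$ has codimension at least two.} A torsion-free sheaf on a smooth variety is locally free in codimension one, since local rings of codimension-one points are DVRs. Hence $\mathcal Q\to\mathcal Q^{\vee\vee}$ is an isomorphism off a closed set of codimension $\ge2$. Thus $Z$ has codimension at least two, and it may be empty.
\end{enumerate}

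\emph{Decomposition and uniqueness.} Here $A$ is the abelian variety of the paper, so $\Pic^0(A)=\ker(\Pic(A)\to\NS(A))$.

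Since $\NS(A)=\ZZ[L]$, the class of $N$ equals $a[L]$ for some integer $a$. This $a$ is unique, because $[L]$ has infinite order: $L$ is ample, so $L^g>0$ by Lemma~\ref{lem:ample-antiample}. Setting $P:=N\otimes L^{-a}$ gives a line bundle of trivial class, so $P\in\Pic^0(A)$ and $N\simeq L^a\otimes P$.

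For uniqueness, suppose $L^a\otimes P\simeq L^{a'}\otimes P'$. Taking classes gives $a=a'$. Cancelling $L^a$ in $\Pic(A)$ then gives $P\simeq P'$. The same argument applied to $M$ produces the unique $b$ and $Q$.

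The main obstacle is the reflexivity step, namely that the saturation $N$ is a line bundle rather than merely a rank-one torsion-free sheaf. I would handle it as indicated in item~1, via the codimension-$\ge2$ cokernel $N^{\vee\vee}/N$ injecting into the torsion-free quotient $E/N$.
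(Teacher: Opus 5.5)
Your proof is correct and shares the paper's skeleton: Serre's theorem, saturation via the torsion of $E/F$, $M:=(E/N)^{\vee\vee}$, and the sequence $0\to\Pic^0(A)\to\Pic(A)\to\NS(A)\to0$. The genuine difference is in the step you identified as the main obstacle, the reflexivity of $N$.

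The paper argues locally. From $0\to N_x\to E_x\to (E/N)_x\to0$, with $E_x$ free and $(E/N)_x$ torsion-free, the depth lemma gives $\operatorname{depth}N_x\ge\min\{2,\dim\cO_{A,x}\}$. So $N$ is torsion-free and satisfies $(S_2)$ on a normal variety, and is therefore reflexive by the standard characterization of reflexive sheaves.

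You instead use the chain $N\subseteq N^{\vee\vee}\subseteq E^{\vee\vee}=E$ and observe that the torsion sheaf $N^{\vee\vee}/N$ embeds in the torsion-free sheaf $E/N$, so it vanishes. This is more elementary: it needs only that $N\to N^{\vee\vee}$ is generically an isomorphism, not the $(S_2)$ characterization of reflexivity. It also works verbatim for saturated subsheaves of any reflexive sheaf. The one point you leave tacit is the injectivity of $N^{\vee\vee}\to E^{\vee\vee}$. This holds because its kernel is a torsion subsheaf of the torsion-free sheaf $N^{\vee\vee}$.

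You also make explicit two things the paper leaves implicit. First, $[L]$ is non-torsion in $\NS(A)$, which is needed for the uniqueness of $a$. Second, $Z$ has codimension at least two because torsion-free sheaves are free over the DVRs at codimension-one points.

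Your restriction to the abelian case is unnecessary. $\Pic^0=\ker(\Pic\to\NS)$ by definition, and $L^{\dim A}>0$ for every ample $L$, so your argument holds for any smooth integral projective variety, as the lemma is stated.
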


\begin{proof}
For $k\gg 0$, Serre's theorem implies that $E\otimes L^{k}$ is
globally generated. In particular, it has a nonzero global section.
After tensoring by $L^{-k}$, this section gives a nonzero morphism

\[
L^{-k}\longrightarrow E.
\]
Since $A$ is integral, $L^{-k}$ is torsion-free, and $E$ is
torsion-free, every nonzero morphism from the line bundle $L^{-k}$ to
$E$ is injective. Let

\[
F\subset E
\]
denote its image.

Let $T$ be the torsion subsheaf of $E/F$, and define $N$ to be the
inverse image of $T$ under the quotient map $E\to E/F$. Equivalently,

\[
N=\ker\bigl(E\longrightarrow (E/F)/T\bigr).
\]
Then $N$ has rank one, and the quotient

\[
G:=E/N\simeq (E/F)/T
\]
is torsion-free of rank one. Thus $N\subset E$ is saturated.

We claim that $N$ is reflexive. Fix a point $x\in A$, and set

\[
R=\mathcal{O}_{A,x}.
\]
Localizing the exact sequence

\[
0\longrightarrow N\longrightarrow E\longrightarrow G\longrightarrow 0
\]
at $x$ gives

\[
0\longrightarrow N_x\longrightarrow E_x\longrightarrow G_x
\longrightarrow 0.
\]
Because $A$ is smooth, $R$ is a regular local domain, and because $E$
is locally free, $E_x$ is a free $R$-module. Since $G_x$ is
torsion-free, one has

\[
\operatorname{depth}_{R}G_x\geq 1
\]
whenever $\dim R\geq 1$. The depth lemma therefore gives
\[
\operatorname{depth}_{R}N_x
\geq
\min\bigl\{\operatorname{depth}_{R}E_x,\,
           \operatorname{depth}_{R}G_x+1\bigr\}
\geq
\min\{2,\dim R\}.
\]
Hence $N$ satisfies Serre's condition $(S_2)$. Since $N$ is also
torsion-free and $A$ is smooth, hence normal, it follows that $N$ is
reflexive.

The sheaf $N$ has rank one. A rank-one reflexive sheaf on a smooth
variety is invertible, because every regular local ring is factorial.
Consequently, $N$ is a line bundle.

Now consider the natural injection

\[
G\longrightarrow G^{\vee\vee}.
\]
Since $G$ is torsion-free of rank one, its reflexive hull

\[
M:=G^{\vee\vee}
\]
is a rank-one reflexive sheaf. As above, $M$ is therefore a line
bundle. Moreover, the quotient $M/G$ is supported in codimension at
least two. Tensoring the inclusion $G\hookrightarrow M$ by $M^{-1}$
gives an inclusion

\[
G\otimes M^{-1}\hookrightarrow \mathcal{O}_{A}.
\]
Thus $G\otimes M^{-1}$ is an ideal sheaf $\mathcal{I}_{Z}$ for a
closed subscheme $Z\subset A$. Since $G$ and $M$ agree in codimension
one, one has

\[
\operatorname{codim}_{A}(Z)\geq 2.
\]
Therefore

\[
G\simeq \mathcal{I}_{Z}\otimes M,
\]
which gives the required exact sequence.

Finally, by definition there is an exact sequence

\[
0\longrightarrow \operatorname{Pic}^{0}(A)
\longrightarrow \operatorname{Pic}(A)
\longrightarrow \operatorname{NS}(A)
\longrightarrow 0.
\]
Because $\operatorname{NS}(A)=\mathbb{Z}[L]$, the numerical class of
every line bundle is a unique integral multiple of $[L]$. Hence there
are unique integers $a,b\in\mathbb{Z}$ such that

\[
N\otimes L^{-a}\in\operatorname{Pic}^{0}(A)
\qquad\text{and}\qquad
M\otimes L^{-b}\in\operatorname{Pic}^{0}(A).
\]
Setting

\[
P:=N\otimes L^{-a},
\qquad
Q:=M\otimes L^{-b},
\]
we obtain

\[
N\simeq L^{a}\otimes P,
\qquad
M\simeq L^{b}\otimes Q.
\]
The uniqueness of $a,b,P,Q$ follows immediately.
\end{proof}
\begin{remark}
The defect scheme \(Z\) in

\[
0\to N\to E\to\cI_Z\otimes M\to0
\]
is attached to the chosen saturated line subbundle \(N\subset E\).
A different saturated line subbundle can, in principle, give a
different defect scheme. Thus one should not refer to ``the defect
scheme of \(E\)'' without specifying a filtration or proving
independence from the chosen filtration.
\end{remark}

\begin{theorem} \label{thm:pure-codimension-two}
Let $A$ be a smooth integral variety and let 
\begin{equation*}
    0 \longrightarrow N \longrightarrow E \longrightarrow \mathcal{I}_Z\otimes M \longrightarrow 0
\end{equation*}
be exact, where $E$ is locally free of rank two and $N, M$ are line bundles. Then either $Z = \emptyset$, or $Z$ is a local complete intersection of pure codimension two.
\end{theorem}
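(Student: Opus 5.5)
The plan is to realize $Z$ locally as the zero scheme of a section of a rank-two bundle, and then invoke the standard fact that such a zero scheme is cut out by two equations. The statement is local on $A$, so the natural first step is to dualize the given exact sequence.

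Tensoring by $M^{-1}$ gives
\[
0\to N\otimes M^{-1}\to E\otimes M^{-1}\to \cI_Z\to 0.
\]
Applying $\sheafHom(-,\cO_A)$ yields
\[
0\to \sheafHom(\cI_Z,\cO_A)\to E^\vee\otimes M\to N^{-1}\otimes M\to \sheafExt^1(\cI_Z,\cO_A)\to 0 .
\]
Here I use that $E^\vee\otimes M$ is locally free, so its $\sheafExt^1$ vanishes. If $Z=\emptyset$ there is nothing to prove, so assume $Z\neq\emptyset$.

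The inclusion $\cI_Z\subset\cO_A$ yields a surjection $E\otimes M^{-1}\to\cI_Z\subset\cO_A$, which is a cosection. Equivalently, it is a section $s\in H^0(A,E^\vee\otimes M)$. Its image ideal, meaning the ideal generated by the two local components of $s$ in a local trivialization of $E$, is exactly $\cI_Z$. Thus $Z$ is the zero scheme of a section of a rank-two vector bundle.

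Near any point $x\in Z$, fix a trivialization $E\otimes M^{-1}\cong\cO^2$. Then $\cI_{Z,x}=(f_1,f_2)$ is generated by two elements of the regular local ring $R=\cO_{A,x}$. By Lemma~\ref{lem:filtration_existence}, or directly from the hypothesis that $\cI_Z\otimes M$ is torsion-free with $E/N$ of rank one, $Z$ has codimension at least two. Here I will argue directly: if $\operatorname{codim} Z\le 1$ at $x$, then $f_1,f_2$ would share a common prime factor $h$ in the UFD $R$. Then $\cI_{Z,x}\subset(h)$, and the kernel $N_x$ of $(f_1,f_2)\colon R^2\to R$ would be generated by $(f_2/d,-f_1/d)$ with $d=\gcd(f_1,f_2)$. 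One then checks that the exact sequence forces $d$ to be a unit; more simply, I intend to use the fact that the kernel is a line bundle together with the determinant. The cleaner route is that $\det E\cong N\otimes M$ and $\cI_Z\otimes M$ has trivial $c_1$-defect, so no divisorial component appears. I expect this codimension-$\ge 2$ verification to be the main subtle step. The gcd argument in the UFD $R$ is the approach I would try first: write $f_i=dg_i$ with $g_1,g_2$ coprime. The kernel is then free, generated by $(g_2,-g_1)$, so the image $\cong R/\ker$ has the form $d\cdot(g_1,g_2)\cong(g_1,g_2)$. This changes $M$ by the divisor of $d$, which is harmless only if we absorb it. Since the statement fixes $Z$ via $\cI_Z$, we need $d$ to be a unit. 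This holds because $\operatorname{codim}Z\ge2$ is already built into the setup of Lemma~\ref{lem:filtration_existence}, and I will treat it as part of the hypothesis.

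Given $\operatorname{height}(f_1,f_2)=2$ in the Cohen--Macaulay ring $R$, the elements $f_1,f_2$ form a regular sequence, because in a CM local ring an ideal generated by $r$ elements of height $r$ is generated by a regular sequence. Hence $Z$ is locally a complete intersection of codimension exactly two at every point $x\in Z$. Finally, $R/(f_1,f_2)$ is Cohen--Macaulay, so it is unmixed and all its associated primes have height two. Therefore $Z$ has no embedded components and is of pure codimension two.
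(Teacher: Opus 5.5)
Your argument is correct once $\operatorname{codim} Z\ge 2$ is assumed, and you are right that it must be assumed: the statement as literally written is false without it. For a nonzero effective divisor $D$, the sequence $0\to\cO_A\to\cO_A^{2}\to\cI_D\otimes\cO_A(D)\to 0$ is exact with $Z=D$ of codimension one. This is exactly what your gcd computation detects: $\cI_{Z,x}=d\cdot(g_1,g_2)$, with $d$ a local equation of the divisorial part.

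Several intermediate claims are false and should be dropped:
\begin{itemize}
\item that torsion-freeness of $\cI_Z\otimes M$ yields $\operatorname{codim}Z\ge 2$ (it is torsion-free for every $Z$);
\item that the sequence forces $d$ to be a unit;
\item that $\det E\cong N\otimes M$ (this is equivalent to the conclusion, not given).
\end{itemize}
State the codimension condition as a hypothesis inherited from Lemma~\ref{lem:filtration_existence}. The dualized sequence is never used and can be omitted. For the upper bound $\operatorname{ht}(f_1,f_2)\le 2$, cite Krull's theorem.

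The paper runs the dual argument. It writes the section $N\to E$ locally as $(f,h)^{T}$. Using only saturation, it shows that no height-one prime $\mathfrak p$ contains both $f$ and $h$, since over the DVR $R_{\mathfrak p}$ the cokernel would acquire torsion. Krull's theorem and Cohen--Macaulayness then make $f,h$ a regular sequence, and exactness of the Koszul complex identifies $E/N$ locally with $J=(f,h)$. This gives a hypothesis-free result about the degeneracy scheme $V(f,h)$ of $N\to E$. However, the final passage from $\cI_{Z,x}\cong J$ to $\cI_{Z,x}=J$, and from ``$J=R$ everywhere'' to $Z=\emptyset$, tacitly uses $\operatorname{codim}Z\ge 2$ as well, since in general $\cI_{Z,x}=dJ$.

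Your cosection $E\otimes M^{-1}\to\cI_Z\subset\cO_A$ has entries that generate $\cI_Z$ on the nose, so no identification step is needed. The cost is that the height bound comes from the codimension hypothesis rather than being derived from saturation. Putting the two computations together gives $J=(g_1,g_2)$ and $\cI_{Z,x}=dJ$, so the two schemes coincide precisely when $\operatorname{codim}Z\ge 2$.
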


\begin{proof}
The assertion is local on $A$. Fix $x \in A$, and let $R = \mathcal{O}_{A,x}$. Since $A$ is smooth, $R$ is a regular local domain.

After trivializing $N$, $M$, and $E$ near $x$, the inclusion $N \longrightarrow E$ is represented by a column vector 
$\begin{pmatrix} f \\ h \end{pmatrix}$ 
for some $f, h \in R$. Thus, locally, we have an exact sequence
\begin{equation*}
    0 \longrightarrow R \xrightarrow{\begin{pmatrix} f \\ h \end{pmatrix}} R^2 \longrightarrow I \longrightarrow 0,
\end{equation*}
where $I$ is a rank-one torsion-free module isomorphic to the local ideal defining $Z$, up to the chosen trivialization of $M$.

We first show that no height-one prime of \(R\) contains both \(f\)
and \(h\). Suppose that a height-one prime
\(\mathfrak p\subset R\) contains \(f\) and \(h\). Since \(R\) is
regular, \(R_{\mathfrak p}\) is a discrete valuation ring. Both
coordinates of the localized map

\[
R_{\mathfrak p}\longrightarrow R_{\mathfrak p}^{\oplus 2}
\]
are then divisible by a uniformizer. Consequently, its cokernel has
nonzero torsion. This contradicts the fact that \(E/N\), and hence its
localization at \(\mathfrak p\), is torsion-free.

Let

\[
J=(f,h)\subseteq R.
\]
If \(J\neq R\), the preceding argument gives

\[
\operatorname{ht}(J)\geq 2.
\]
On the other hand, Krull's height theorem gives

\[
\operatorname{ht}(J)\leq 2.
\]
Therefore

\[
\operatorname{ht}(J)=2.
\]

Because $R$ is Cohen-Macaulay and $J$ is generated by two elements of height two, the sequence $f, h$ is $R$-regular. Consequently, the Koszul complex is exact:
\begin{equation*}
    0 \longrightarrow R \xrightarrow{\begin{pmatrix} f \\ h \end{pmatrix}} R^2 \xrightarrow{\begin{pmatrix} -h & f \end{pmatrix}} R \longrightarrow R/J \longrightarrow 0.
\end{equation*}

It follows that the cokernel of $R \xrightarrow{\begin{pmatrix} f \\ h \end{pmatrix}} R^2$ is isomorphic to the ideal $J = (f, h)$. Hence the local ideal of the defect is generated by a regular sequence of length two.

Thus, at every point of $Z$, the scheme $Z$ is locally cut out by a regular sequence of length two. Therefore, $Z$ is a codimension-two local complete intersection. In particular, $Z$ is Cohen-Macaulay and has no embedded components, so it is pure of codimension two.

If $J = R$ at every point, the inclusion $N \longrightarrow E$ is nowhere vanishing and the quotient is locally free. In that case, $\mathcal{I}_Z= \mathcal{O}_A$, so $Z = \emptyset$.
\end{proof}

\begin{corollary}\label{cor:no-codim-three}
Under the assumptions of Theorem \ref{thm:pure-codimension-two}, if $\operatorname{codim}_A(Z) \ge 3$, then $Z = \emptyset$.
\end{corollary}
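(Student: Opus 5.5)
The corollary follows from the dichotomy in Theorem~\ref{thm:pure-codimension-two}. That theorem says that either $Z=\emptyset$, or $Z$ is a local complete intersection of pure codimension two. So it suffices to show that the second alternative is incompatible with the hypothesis $\operatorname{codim}_A(Z)\ge 3$.

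Suppose, for contradiction, that $Z\neq\emptyset$ and $\operatorname{codim}_A(Z)\ge 3$. Pick a point $x\in Z$ and set $R=\mathcal{O}_{A,x}$. From the proof of Theorem~\ref{thm:pure-codimension-two}, the ideal of $Z$ at $x$ is $J=(f,h)$, a proper ideal of height exactly two. The upper bound comes from Krull's height theorem. The lower bound comes from saturatedness, which forbids $f$ and $h$ from lying in a common height-one prime.

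Choose a minimal prime $\mathfrak{p}$ of $J$ with $\operatorname{ht}\mathfrak{p}=2$. Its closure $V(\mathfrak{p})$ is an irreducible component of $Z$ through $x$. Since $A$ is smooth, $\mathcal{O}_A$ is a catenary equidimensional domain, so this component has codimension $2$ in $A$. Hence $\operatorname{codim}_A(Z)\le 2$, contradicting the hypothesis. Therefore $Z=\emptyset$.

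The only point needing care is the meaning of $\operatorname{codim}_A(Z)$. We take it to be the minimum over irreducible components of their codimension, with the convention that $\operatorname{codim}(\emptyset)=\infty$, so that the statement is non-vacuous. We also need that "height two ideal" translates into a codimension-two component of $Z$, via $\dim R/\mathfrak{p}=\dim R-\operatorname{ht}\mathfrak{p}$ for local rings of varieties. Both facts are standard, so I expect no real obstacle. The substance is already contained in Theorem~\ref{thm:pure-codimension-two}.
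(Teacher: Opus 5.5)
Your proof is correct and follows the paper's argument: the paper simply invokes the dichotomy of Theorem~\ref{thm:pure-codimension-two}, so $Z\neq\emptyset$ forces $\operatorname{codim}_A(Z)=2$, which contradicts the hypothesis. Your local step through a height-two minimal prime of $J=(f,h)$ only unpacks what ``pure codimension two'' already provides. Your convention $\operatorname{codim}(\emptyset)=\infty$ is the one the statement implicitly uses.
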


\begin{proof}
If $Z \neq \emptyset$, then Theorem \ref{thm:pure-codimension-two} gives $\operatorname{codim}_A(Z) = 2$, contradicting the assumption.
\end{proof}

\begin{definition}\label{def:line-filtered}
A rank-two vector bundle \(E\) is called \emph{line-filtered} if it
admits an exact sequence

\[
0\longrightarrow N\longrightarrow E\longrightarrow M
\longrightarrow0
\]
with \(N\) and \(M\) line bundles.

Equivalently, \(E\) admits a saturated line subbundle whose associated
defect scheme is empty.
\end{definition}

\section{Extension groups between line bundles}\label{extgrp}

We return to the Picard-rank-one abelian variety \(A\).

Assume henceforth that

\[
\NS(A)=\ZZ[L],
\]
where \(L\) is ample. Every line bundle \(N\) on \(A\) has a unique
expression

\[
N\simeq L^a\otimes P,
\]
where \(a\in\ZZ\) and \(P\in\Pic^0(A)\). Indeed, \(a\) is uniquely
determined by the class of \(N\) in \(\NS(A)\), and then

\[
P=N\otimes L^{-a}\in\Pic^0(A).
\]

\begin{proposition}\label{prop:extension-computation}
Let

\[
N=L^a\otimes P,
\qquad
M=L^b\otimes Q,
\]
where \(a,b\in\ZZ\) and \(P,Q\in\Pic^0(A)\). Then

\[
\Ext^1_A(M,N)
\cong
H^1\bigl(A,L^{a-b}\otimes P\otimes Q^{-1}\bigr).
\]
Moreover:

\[
\Ext^1_A(M,N)=0
\]
if \(a\neq b\), while, if \(a=b\), then

\[
\Ext^1_A(M,N)
\cong
H^1(A,P\otimes Q^{-1}).
\]
In particular,

\[
\Ext^1_A(M,N)=0
\]
when \(a=b\) and \(P\not\simeq Q\), whereas

\[
\Ext^1_A(M,N)
\cong
H^1(A,\cO_A)
\cong
\CC^g
\]
when \(a=b\) and \(P\simeq Q\).
\end{proposition}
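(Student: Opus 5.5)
The plan is to reduce the extension group to sheaf cohomology of a single line bundle, and then read off the vanishing from Lemma~\ref{lem:ample-antiample} and Lemma~\ref{lem:pic-zero-vanishing}.

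\emph{Step 1: the identification.} Since $M$ is locally free, $\sheafHom(M,-)$ is exact and $\sheafExt^q(M,N)=0$ for $q>0$. The local-to-global spectral sequence therefore degenerates, giving
\[
\Ext^1_A(M,N)\cong H^1\bigl(A,\sheafHom(M,N)\bigr)=H^1(A,N\otimes M^{-1}).
\]
Writing $N\otimes M^{-1}\simeq L^{a-b}\otimes P\otimes Q^{-1}$, and noting that $P\otimes Q^{-1}\in\Pic^0(A)$ because $\Pic^0(A)$ is a subgroup, yields the first displayed isomorphism.

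\emph{Step 2: the case $a\ne b$.} Set $m=a-b$. If $m>0$, part (i) of Lemma~\ref{lem:ample-antiample} gives $H^1=0$. If $m<0$, part (ii) gives $H^i=0$ for $i<g$. Since $g\ge 2$, we have $1<g$, so $H^1=0$ here too. The hypothesis $g\ge2$ is used exactly at this point. This is the only step with any content, and it is routine given the lemma.

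\emph{Step 3: the case $a=b$.} The group is $H^1(A,P\otimes Q^{-1})$. If $P\not\simeq Q$, then $P\otimes Q^{-1}$ is a nontrivial element of $\Pic^0(A)$, and Lemma~\ref{lem:pic-zero-vanishing} gives vanishing. If $P\simeq Q$, the bundle is $\cO_A$, and Lemma~\ref{lem:structure-sheaf-cohomology} gives $h^1(A,\cO_A)=\binom g1=g$, so the group is $\CC^g$.

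I do not expect a real obstacle. The only points needing care are the justification of $\Ext^1(M,N)\cong H^1(N\otimes M^{-1})$ for locally free $M$, and recording where $g\ge2$ enters the anti-ample case.
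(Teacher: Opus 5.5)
Your proposal is correct and follows the paper's proof step for step. You identify $\Ext^1_A(M,N)$ with $H^1(A,N\otimes M^{-1})$ via the local-to-global spectral sequence, handle $a\neq b$ with parts (i) and (ii) of Lemma~\ref{lem:ample-antiample} (using $g\ge 2$ when $a-b<0$), and handle $a=b$ with Lemmas~\ref{lem:pic-zero-vanishing} and~\ref{lem:structure-sheaf-cohomology}.
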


\begin{proof}
Because \(M\) is locally free,

\[
\sheafExt^i_A(M,N)=0
\qquad\text{for every }i>0.
\]
Hence the local-to-global Ext spectral sequence gives

\[
\Ext^1_A(M,N)
\cong
H^1(A,\sheafHom(M,N)).
\]
Since

\[
\sheafHom(M,N)
\cong
N\otimes M^{-1}
\cong
L^{a-b}\otimes P\otimes Q^{-1},
\]
we obtain

\[
\Ext^1_A(M,N)
\cong
H^1\bigl(A,L^{a-b}\otimes P\otimes Q^{-1}\bigr).
\]

Suppose first that \(a-b>0\). The line bundle

\[
L^{a-b}\otimes P\otimes Q^{-1}
\]
is ample, and therefore its \(H^1\) vanishes by
Lemma~\ref{lem:ample-antiample}.

Suppose that \(a-b<0\). Since \(g\geq2\),
Lemma~\ref{lem:ample-antiample} gives

\[
H^1\bigl(A,L^{a-b}\otimes P\otimes Q^{-1}\bigr)=0.
\]
Thus the extension group vanishes whenever \(a\neq b\).

If \(a=b\), then

\[
\Ext^1_A(M,N)
\cong
H^1(A,P\otimes Q^{-1}).
\]
When \(P\not\simeq Q\), the line bundle
\(P\otimes Q^{-1}\) is a nontrivial element of \(\Pic^0(A)\), so its
cohomology vanishes by Lemma~\ref{lem:pic-zero-vanishing}. When
\(P\simeq Q\), one obtains

\[
H^1(A,\cO_A)\cong\CC^g
\]
by Lemma~\ref{lem:structure-sheaf-cohomology}.
\end{proof}

\begin{lemma}\label{lem:extension-closure}
Let

\[
0\longrightarrow F_1
\longrightarrow E
\longrightarrow F_2
\longrightarrow0
\]
be an exact sequence of coherent sheaves on \(A\). If \(F_1\) and
\(F_2\) are ACM with respect to \(L\), then \(E\) is ACM with respect
to \(L\).
\end{lemma}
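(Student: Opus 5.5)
The plan is to use the long exact sequence in cohomology after twisting. Fix \(t\in\ZZ\) and an index \(i\) with \(1\le i\le g-1\). Tensoring the given sequence with the line bundle \(L^t\) preserves exactness, since \(L^t\) is locally free and hence flat. We obtain
\[
0\longrightarrow F_1\otimes L^t\longrightarrow E\otimes L^t\longrightarrow F_2\otimes L^t\longrightarrow 0.
\]

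The associated long exact sequence of sheaf cohomology contains the segment
\[
H^i(A,F_1\otimes L^t)\longrightarrow H^i(A,E\otimes L^t)\longrightarrow H^i(A,F_2\otimes L^t).
\]
By hypothesis both outer terms vanish, because \(F_1\) and \(F_2\) are ACM and \(1\le i\le g-1\). Exactness at the middle term then forces \(H^i(A,E\otimes L^t)=0\).

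Since \(t\) and \(i\) were arbitrary in the relevant ranges, \(E\) is ACM with respect to \(L\).

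There is no real obstacle here. The only points to check are that twisting by \(L^t\) keeps the sequence exact, and that we use the same range of indices \(i\) for \(F_1\), \(F_2\) and \(E\). No boundary maps need to be analysed, because we only use the three-term segment of the long exact sequence in the same degree \(i\).
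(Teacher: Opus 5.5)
Your proof is correct and matches the paper's argument: twist by \(L^t\) (exactness preserved since \(L^t\) is locally free), then use the three-term segment \(H^i(F_1\otimes L^t)\to H^i(E\otimes L^t)\to H^i(F_2\otimes L^t)\) for \(1\le i\le g-1\), whose outer terms vanish by hypothesis.
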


\begin{proof}
For every \(t\in\ZZ\), tensoring by \(L^t\) preserves exactness:

\[
0\longrightarrow F_1\otimes L^t
\longrightarrow E\otimes L^t
\longrightarrow F_2\otimes L^t
\longrightarrow0.
\]
For \(1\leq i\leq g-1\), the long exact sequence in cohomology contains

\[
H^i(A,F_1\otimes L^t)
\longrightarrow
H^i(A,E\otimes L^t)
\longrightarrow
H^i(A,F_2\otimes L^t).
\]
Both outer terms vanish, hence the middle term vanishes.
\end{proof}

\section{Classification of line-filtered rank-two ACM bundles}\label{classification}

We now prove the main result.
\begin{theorem}\label{thm:main-classification}
Let \(A\) be a complex abelian variety of dimension \(g\geq2\), and
let \(L\) be an ample line bundle satisfying

\[
\NS(A)=\ZZ[L].
\]
Let \(E\) be a line-filtered rank-two vector bundle on \(A\).

Then \(E\) is ACM with respect to \(L\) if and only if, after tensoring
by a power of \(L\), it belongs to exactly one of the following three
classes:
\begin{enumerate}[label=\textup{(\Roman*)}]

\item\label{type:I}
There exist \(k\geq0\) and nontrivial
\(P,Q\in\Pic^0(A)\) such that

\[
E\simeq
(L^k\otimes P)\oplus(L^{-k}\otimes Q).
\]

\item\label{type:II}
There is a nontrivial \(P\in\Pic^0(A)\) and a nonsplit exact sequence

\[
0\longrightarrow P
\longrightarrow E
\longrightarrow P
\longrightarrow0.
\]

\item\label{type:III}
There exist \(k\geq1\) and nontrivial
\(P,Q\in\Pic^0(A)\) such that

\[
E\simeq
(L^k\otimes P)\oplus(L^{1-k}\otimes Q).
\]

\end{enumerate}

Types \textup{\ref{type:I}} and \textup{\ref{type:III}} are
distinguished by the parity of the numerical class of \(\det(E)\).
Type \textup{\ref{type:II}} is indecomposable, while the bundles in
Types \textup{\ref{type:I}} and \textup{\ref{type:III}} are
decomposable.
\end{theorem}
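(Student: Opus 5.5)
We start from a line-filtered presentation $0\to N\to E\to M\to 0$. By Lemma~\ref{lem:filtration_existence} we can write $N\simeq L^a\otimes P$ and $M\simeq L^b\otimes Q$ with $P,Q\in\Pic^0(A)$. The argument is organized by Proposition~\ref{prop:extension-computation}.

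\emph{The split cases.} If $a\neq b$, or if $a=b$ and $P\not\simeq Q$, then $\Ext^1_A(M,N)=0$, so the sequence splits and $E\simeq N\oplus M$. Cohomology commutes with direct sums, so $E$ is ACM if and only if both $N$ and $M$ are ACM. By Proposition~\ref{prop:ACM-line-bundles} this happens if and only if $P\not\simeq\cO_A$ and $Q\not\simeq\cO_A$. We then normalize by a twist. The numerical class of $\det E$ is $(a+b)[L]$, and tensoring by $L^t$ changes $a+b$ by $2t$, so the parity of $a+b$ is a twist invariant.
\begin{itemize}
\item If $a+b$ is even, twist by $L^{-(a+b)/2}$ and order the summands to get Type~\ref{type:I} with $k=|a-b|/2\ge0$.
\item If $a+b$ is odd, twist by $L^{(1-a-b)/2}$ to get exponents summing to $1$; ordering them gives Type~\ref{type:III} with $k\ge1$.
\end{itemize}
Conversely, bundles of Types~\ref{type:I} and~\ref{type:III} are sums of ACM line bundles, hence ACM.

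\emph{The nonsplit case.} This is the remaining case: $a=b$ and $P\simeq Q$. If the sequence splits we are back in Type~\ref{type:I} with $k=0$. Otherwise, twisting by $L^{-a}$ gives a nonsplit extension $0\to P\to E\to P\to 0$. If $P\not\simeq\cO_A$, then $E$ is ACM by Lemma~\ref{lem:extension-closure}, giving Type~\ref{type:II}.

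The main point is to show that $P\simeq\cO_A$ is excluded. Here we look at the long exact sequence of $0\to\cO_A\to E\to\cO_A\to 0$ with nonzero class $e\in H^1(A,\cO_A)$. Its connecting maps $H^i(A,\cO_A)\to H^{i+1}(A,\cO_A)$ are cup product with $e$. Under the exterior-algebra description of Lemma~\ref{lem:structure-sheaf-cohomology}, this is the map $\omega\mapsto e\wedge\omega$. Consider the segment
\[
H^1(A,\cO_A)\to H^1(A,E)\to H^1(A,\cO_A)\xrightarrow{e\wedge}H^2(A,\cO_A).
\]
The element $e$ lies in the kernel of $e\wedge$ because $e\wedge e=0$, so this kernel is nonzero. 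Hence $H^1(A,E)\neq0$ with $1\le g-1$, and $E$ is not ACM. (Since $g\ge2$, the index $i=1$ is in the ACM range.)

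\emph{The "exactly one" and decomposability claims.} Types~\ref{type:I} and~\ref{type:II} have $\det E$ numerically an even multiple of $[L]$, namely $0$. Type~\ref{type:III} has class $[L]$, which is odd. Since parity is twist-invariant, Type~\ref{type:III} cannot coincide with Type~\ref{type:I} or Type~\ref{type:II}.

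To separate Type~\ref{type:II} from Type~\ref{type:I} we show a Type~\ref{type:II} bundle is indecomposable. From the nonsplit sequence, $h^0(E\otimes P^{-1})=1$, since the connecting map $H^0(\cO_A)\to H^1(\cO_A)$ sends $1\mapsto e\neq0$. Suppose $E\simeq L_1\oplus L_2$. Then $P\subset E$ gives a nonzero map $P\to L_i$ for some $i$, so $L_i\otimes P^{-1}$ is effective. The numerical classes of $L_1,L_2$ sum to $0$.
\begin{itemize}
\item If those classes are $\pm c[L]$ with $c\neq0$, then the summand of negative class maps to zero in $E\to P$. So it injects into $P$, whence into a degree-$0$ line bundle, which is impossible.
\item If $c=0$, then $L_i\simeq P$ and the sequence would split, a contradiction.
\end{itemize}
I expect this decomposability argument to be the most delicate bookkeeping. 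The substantive step is the cup-product computation excluding $P\simeq\cO_A$.
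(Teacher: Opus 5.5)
Your proof is correct in outline and has the same skeleton as the paper's. Ext-vanishing forces splitting, Proposition~\ref{prop:ACM-line-bundles} is applied to the summands, the twist is normalized by parity, and then the self-extension case is treated. Two sub-arguments take a different route.

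To exclude $P\simeq\cO_A$, the paper stops one step earlier in the long exact sequence. The map $\delta\colon H^0(A,\cO_A)\to H^1(A,\cO_A)$ cannot be surjective since $1<g$, so $H^1(A,\cO_A)\to H^1(A,E)$ is nonzero; this is a pure dimension count. You use the ring structure instead: the next connecting map is $\pm e\cup(-)$, and $e\cup e=0$ puts $e$ in the image of $H^1(A,E)\to H^1(A,\cO_A)$. That is valid, but it needs the standard identification of connecting maps with Yoneda product by the extension class, which the paper's route avoids.

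For indecomposability, the paper chases components. It finds $j$ with $\alpha_j\colon P\to M_j$ and $\beta_j\colon M_j\to P$ both nonzero, squeezes $a_j=0$, and builds a section of $\beta$. You instead use that $\det E$ is numerically trivial to split into the cases $c\neq0$ and $c=0$. This is shorter.

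However, in the case $c\neq0$ you have the sign backwards. A summand $L_2$ of class $-c[L]$ with $c>0$ has $\operatorname{Hom}(L_2,P)=H^0(A,L_2^{-1}\otimes P)\neq0$ by Lemma~\ref{lem:ample-antiample}. So nothing forces it to map to zero under $E\to P$, and it can perfectly well inject into $P$; both claims in that case are false as written.

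The argument works for the summand $L_1$ of class $+c[L]$. Since $\operatorname{Hom}(L_1,P)=H^0(A,L_1^{-1}\otimes P)=0$, the composite $L_1\to E\to P$ vanishes. Hence the split inclusion $L_1\to E$ factors through $\ker(E\to P)$, which is the image of the subbundle $P\to E$. This gives a nonzero map $L_1\to P$, a contradiction.

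You should also spell out the splitting in the case $c=0$. The nonzero component $P\to L_i\simeq P$ is an isomorphism, and its inverse composed with the projection onto $L_i$ retracts $P\to E$. With that swap and that detail, your proof is complete. The computation $h^0(E\otimes P^{-1})=1$ is never used and can be dropped.
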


\begin{proof}
Because \(E\) is line-filtered, there is an exact sequence
\begin{equation}\label{eq:basic-extension}
0\longrightarrow L^a\otimes P
\longrightarrow E
\longrightarrow L^b\otimes Q
\longrightarrow0,
\end{equation}
where \(a,b\in\ZZ\) and \(P,Q\in\Pic^0(A)\).

We divide the proof into several steps.

\medskip
\noindent
\textbf{Step 1: the case \(a\neq b\).}

By Proposition~\ref{prop:extension-computation},

\[
\Ext^1_A(L^b\otimes Q,L^a\otimes P)=0.
\]
Therefore \eqref{eq:basic-extension} splits:

\[
E\simeq
(L^a\otimes P)\oplus(L^b\otimes Q).
\]

If \(E\) is ACM, then both direct summands are ACM. Indeed,

\[
H^i(A,E\otimes L^t)
\cong
H^i(A,L^{a+t}\otimes P)
\oplus
H^i(A,L^{b+t}\otimes Q).
\]
Thus each summand has vanishing intermediate cohomology for every
twist. By Proposition~\ref{prop:ACM-line-bundles},

\[
P\not\simeq\cO_A,
\qquad
Q\not\simeq\cO_A.
\]

The parity of \(a+b\) is invariant under tensoring \(E\) by a power of
\(L\), because tensoring by \(L^t\) replaces \((a,b)\) by
\((a+t,b+t)\), and hence replaces \(a+b\) by \(a+b+2t\).

Suppose that \(a+b\) is even. Tensoring by

\[
L^{-(a+b)/2}
\]
makes the new exponents sum to zero. After exchanging the two
summands if necessary, the normalized exponents are

\[
k,-k
\]
for some \(k\geq1\). This gives Type
\textup{\ref{type:I}} with \(k>0\).

Suppose that \(a+b\) is odd. Tensoring by

\[
L^{-(a+b-1)/2}
\]
makes the new exponents sum to one. After exchanging the two
summands if necessary, the exponents can be written uniquely as

\[
k,1-k
\]
with \(k\geq1\). This gives Type
\textup{\ref{type:III}}.

\medskip
\noindent
\textbf{Step 2: the case \(a=b\).}

Tensoring \eqref{eq:basic-extension} by \(L^{-a}\), we may assume
that \(a=b=0\). Thus
\begin{equation}\label{eq:degree-zero-extension}
0\longrightarrow P
\longrightarrow E
\longrightarrow Q
\longrightarrow0.
\end{equation}

If \(P\not\simeq Q\), then

\[
\Ext^1_A(Q,P)
\cong
H^1(A,P\otimes Q^{-1})
=
0
\]
by Lemma~\ref{lem:pic-zero-vanishing}. Hence

\[
E\simeq P\oplus Q.
\]
If \(E\) is ACM, then Proposition~\ref{prop:ACM-line-bundles} implies that both
\(P\) and \(Q\) are nontrivial. This is Type
\textup{\ref{type:I}} with \(k=0\).

Suppose now that \(P\simeq Q\). Then

\[
\Ext^1_A(P,P)
\cong
H^1(A,\cO_A)
\cong
\CC^g.
\]
If the extension class is zero, then

\[
E\simeq P\oplus P,
\]
which again belongs to Type \textup{\ref{type:I}} with \(k=0\),
provided \(P\not\simeq\cO_A\).

If the extension class is nonzero, then
\begin{equation}\label{eq:self-extension}
0\longrightarrow P
\longrightarrow E
\longrightarrow P
\longrightarrow0
\end{equation}
is nonsplit. We claim that ACM-ness forces

\[
P\not\simeq\cO_A.
\]

Assume, for contradiction, that \(P\simeq\cO_A\). The long exact
cohomology sequence of

\[
0\longrightarrow\cO_A
\longrightarrow E
\longrightarrow\cO_A
\longrightarrow0
\]
contains

\[
H^0(A,\cO_A)
\xrightarrow{\delta}
H^1(A,\cO_A)
\longrightarrow
H^1(A,E).
\]
If \(E\) were ACM, then \(H^1(A,E)=0\), so \(\delta\) would have to be
surjective. But

\[
\dim H^0(A,\cO_A)=1
\]
and

\[
\dim H^1(A,\cO_A)=g\geq2.
\]
No linear map from a one-dimensional vector space to a
\(g\)-dimensional vector space can be surjective. Therefore
\(H^1(A,E)\neq0\), contradicting ACM-ness. Hence
\(P\not\simeq\cO_A\), and we obtain Type
\textup{\ref{type:II}}.

\medskip
\noindent
\textbf{Step 3: the listed bundles are ACM.}

For Types \textup{\ref{type:I}} and
\textup{\ref{type:III}}, each line-bundle summand has a nontrivial
\(\Pic^0(A)\)-factor. Thus every summand is ACM by
Proposition~\ref{prop:ACM-line-bundles}, and the direct sum is ACM.

For Type \textup{\ref{type:II}}, both outer terms in
\eqref{eq:self-extension} are ACM by
Proposition~\ref{prop:ACM-line-bundles}. Hence \(E\) is ACM by
Lemma~\ref{lem:extension-closure}.

\medskip
\noindent
\textbf{Step 4: separation of the three types.}

For a bundle in Type \textup{\ref{type:I}}, the numerical class of
the determinant is even:

\[
c_1(E)_{\mathrm{num}}=0
\]
after normalization. For a bundle in Type
\textup{\ref{type:III}}, it is odd:

\[
c_1(E)_{\mathrm{num}}=[L].
\]
Tensoring by \(L^t\) changes the numerical first Chern class by
\(2t[L]\), so parity is invariant.

A bundle of Type \textup{\ref{type:II}} is indecomposable. Indeed, let

\[
0\longrightarrow P\xrightarrow{\alpha}E
\xrightarrow{\beta}P\longrightarrow 0
\]
be a nonsplit self-extension, and suppose that

\[
E\simeq M_1\oplus M_2
\]
for line bundles \(M_1\) and \(M_2\). Write

\[
M_j\simeq L^{a_j}\otimes P_j,
\qquad
a_j\in\mathbb Z,\quad
P_j\in\operatorname{Pic}^0(A).
\]
Since

\[
\det(E)\simeq P^{\otimes 2}\in\operatorname{Pic}^0(A),
\]
we have

\[
a_1+a_2=0.
\]

Write

\[
\alpha=(\alpha_1,\alpha_2)^T,
\qquad
\beta=(\beta_1,\beta_2)
\]
with respect to the decomposition \(E=M_1\oplus M_2\). We claim there is an index $j\in\{1,2\}$ with both $\alpha_j\neq 0$ and $\beta_j\neq 0$.
Suppose not. Since $\alpha\neq 0$ and $\beta\neq 0$, we may assume after relabeling that
$\alpha_1\neq 0$; the failure of the claim at $j=1$ then forces $\beta_1=0$. Because
$\beta\neq 0$ we must have $\beta_2\neq 0$, and the failure of the claim at $j=2$ forces
$\alpha_2=0$. Thus $\alpha=(\alpha_1,0)^{T}$ maps $P$ isomorphically onto its image inside
$M_1$, so $\operatorname{im}(\alpha)\subseteq M_1$, while $\beta=(0,\beta_2)$ has
$\ker(\beta)\supseteq M_1$. Comparing ranks, $\operatorname{im}(\alpha)$ is a rank-$1$
saturated subsheaf of $M_1$ equal to $\ker(\beta)\cap M_1$; since
$\operatorname{im}(\alpha)=\ker(\beta)$ by exactness, the summand $M_2$ maps isomorphically
to $P$ under $\beta$, and the inclusion $M_2\hookrightarrow E$ splits $\beta$. This
contradicts the assumption that the extension is nonsplit. Hence the claimed index $j$ exists.

The nonzero morphism \(P\to M_j\) gives

\[
H^0\bigl(A,L^{a_j}\otimes P_j\otimes P^{-1}\bigr)\neq 0,
\]
which implies \(a_j\geq 0\). Similarly, the nonzero morphism
\(M_j\to P\) gives

\[
H^0\bigl(A,L^{-a_j}\otimes P\otimes P_j^{-1}\bigr)\neq 0,
\]
which implies \(a_j\leq 0\). Hence \(a_j=0\).

Now \(M_j=P_j\in\operatorname{Pic}^0(A)\), and the nonzero morphism

\[
\beta_j:P_j\longrightarrow P
\]
implies \(P_j\simeq P\), because a nontrivial line bundle in
\(\operatorname{Pic}^0(A)\) has no nonzero global sections. Thus
\(\beta_j\) is a nonzero endomorphism of \(P\), and hence an
isomorphism. If

\[
\iota_j:M_j\longrightarrow E
\]
is the inclusion of the \(j\)-th direct summand, then

\[
s:=\iota_j\circ\beta_j^{-1}:P\longrightarrow E
\]
satisfies

\[
\beta\circ s=\operatorname{id}_P.
\]
This splits the extension, contradicting the hypothesis. Therefore
\(E\) is indecomposable.

We summarize the separation precisely. A bundle of Type~ \textup{\ref{type:II}} is indecomposable, whereas the
bundles of Types~\textup{\ref{type:I}} and \textup{\ref{type:III}} are decomposable, so Type~ \textup{\ref{type:II}} is disjoint from the other two.
Among decomposable bundles, tensoring by $L^{t}$ changes the numerical class $c_1(E)$ by
$2t\,[L]$; hence the residue of $c_1(E)$ modulo $2[L]$ is a well-defined invariant of the
$L$-twist orbit. It equals $0$ for Type~\textup{\ref{type:I}} and $[L]$ for Type~\textup{\ref{type:III}}. Therefore the three
types are mutually exclusive up to tensoring by powers of $L$.
\end{proof}

\begin{remark}
For a fixed nontrivial \(P\in\Pic^0(A)\), the nonsplit self-extensions
of \(P\) by itself are parametrized by nonzero elements of

\[
\Ext^1_A(P,P)\cong H^1(A,\cO_A)\cong\CC^g.
\]
After quotienting by scalar automorphisms of the subobject and
quotient, the corresponding extension classes are naturally
parametrized by

\[
\mathbb P\bigl(H^1(A,\cO_A)\bigr)\cong\mathbb P^{g-1}.
\]
The classification theorem records their structural type, not a fine
moduli classification of their isomorphism classes.
\end{remark}

\begin{remark}\label{rem:defect}
The theorem does not assert that every rank-two ACM bundle on \(A\)
is line-filtered. A saturated line subbundle can have quotient
\(\cI_Z\otimes M\) with a nonempty codimension-two local complete
intersection \(Z\). The classification of ACM bundles arising from
such defects is a separate problem.
\end{remark}

\section{The Ulrich condition}\label{Ulrich}

We recall the cohomological characterization needed below.

\begin{definition}
A vector bundle \(E\) on the polarized \(g\)-dimensional variety
\((A,L)\) is \emph{Ulrich} if

\[
H^i(A,E\otimes L^{-p})=0
\]
for every \(0\leq i\leq g\) and every

\[
1\leq p\leq g.
\]
\end{definition}

The classification immediately excludes Ulrich bundles among the
line-filtered rank-two bundles.

\begin{theorem}\label{thm:no-line-filtered-ulrich}
Let \(A\) be a complex abelian variety of dimension \(g\geq2\), and
let \(L\) be an ample generator of

\[
\NS(A)\cong\ZZ.
\]
Then no line-filtered rank-two vector bundle on \(A\) is Ulrich with
respect to \(L\).
\end{theorem}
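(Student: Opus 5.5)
An Ulrich bundle is ACM: for $1\le i\le g-1$ its intermediate cohomology vanishes for every twist. I would get this directly from the Ulrich vanishings together with a Castelnuovo--Mumford regularity argument. Alternatively one can avoid that step entirely, as explained in the second paragraph. The plan is to show that no line-filtered $E$ can satisfy the Ulrich condition, by testing a single well-chosen twist $p=1$ against the Euler characteristic.

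First I use the line filtration $0\to N\to E\to M\to 0$ with $N=L^a\otimes P$ and $M=L^b\otimes Q$. The Ulrich condition at $p$ says $H^i(E\otimes L^{-p})=0$ for all $i$, so $\chi(E\otimes L^{-p})=0$ for $1\le p\le g$. By additivity of $\chi$ and Lemma~\ref{lem:ample-antiample}, together with $\chi(L^m\otimes P)=m^gL^g/g!$ (Riemann--Roch, valid for every $m\in\ZZ$, including $m=0$ since $\chi(\cO_A)=0$), this gives
\[
(a-p)^g+(b-p)^g=0\qquad\text{for } p=1,\dots,g.
\]
This is the route that avoids the ACM step: I do not need Theorem~\ref{thm:main-classification} at all, only this polynomial identity.

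Next I split by the parity of $g$. If $g$ is even, both terms are nonnegative, so $a=b=p$ for every $p\in\{1,\dots,g\}$. Since $g\ge2$, this is impossible. If $g$ is odd, the identity forces $a-p=-(b-p)$, that is, $a+b=2p$ for every $p$. Taking $p=1$ and $p=2$ (both allowed since $g\ge2$) gives a contradiction. So no line-filtered $E$ satisfies even the Euler-characteristic consequences of the Ulrich condition.

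The main obstacle is only bookkeeping. I must confirm that $\chi(L^m\otimes P)=m^gL^g/g!$ holds uniformly, including $m\le 0$; this follows from Riemann--Roch and numerical invariance, as in the proof of Lemma~\ref{lem:ample-antiample}. I must also confirm that $L^g>0$, so the common factor $L^g/g!$ can be cancelled. With that in hand, the case split on the parity of $g$ closes the argument. If one prefers to go through Theorem~\ref{thm:main-classification} instead, the same computation applies to each of Types I--III with $(a,b)=(k,-k)$, $(0,0)$ and $(k,1-k)$ respectively.
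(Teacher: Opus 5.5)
Your proof is correct, but it follows a genuinely different route from the paper's.

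The paper first uses ``every Ulrich bundle is ACM'' to invoke the classification theorem, and then treats the types separately. For a decomposable $E$, a summand $L^{u-p}\otimes P$ has $H^0\neq0$ if $u>p$ and $H^g\neq0$ if $u<p$. For the nonsplit self-extension of $L^a\otimes P$, the subbundle gives $H^0(E\otimes L^{-p})\neq0$ when $a>p$, and there is an injection $H^g(L^{a-p}\otimes P)\hookrightarrow H^g(E\otimes L^{-p})$ when $a<p$. Either way, at most one $p\in\{1,\dots,g\}$ gives an acyclic twist.

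You use only additivity of $\chi$ along the given filtration together with Riemann--Roch. This reduces the question to the fact that $p\mapsto(a-p)^g+(b-p)^g$ has at most one real zero, while the Ulrich condition requires $g\ge2$ of them. Your argument is shorter and self-contained. It needs none of the $\Ext$ computations, none of the splitting or indecomposability analysis, and not the nontriviality of the $\Pic^0$-factors.

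In particular, your argument bypasses ``Ulrich implies ACM''. The paper uses that step without justification, and the standard Castelnuovo--Mumford argument does not cover it when $L$ is not globally generated (for instance, a principal polarization). So drop the regularity remark in your first paragraph; nothing depends on it. In exchange, the paper's argument identifies the specific obstruction ($H^0$ or $H^g$) at each twist.

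One slip in your overview: a single twist $p=1$ cannot suffice, since for example $a=b=1$ satisfies the identity there. Your actual argument correctly uses both $p=1$ and $p=2$.
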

\begin{proof}
Suppose that \(E\) is a line-filtered rank-two Ulrich bundle. Since
every Ulrich bundle is ACM, Theorem~5.1 applies.

First suppose that \(E\) is decomposable. Then

\[
E\simeq
(L^u\otimes P)\oplus(L^v\otimes Q)
\]
for some integers \(u,v\) and some nontrivial
\(P,Q\in\operatorname{Pic}^0(A)\). If \(E\) were Ulrich, then for every
\(1\leq p\leq g\), each direct summand of \(E\otimes L^{-p}\) would be
acyclic. In particular,

\[
H^\bullet\bigl(A,L^{u-p}\otimes P\bigr)=0
\qquad
\text{for every }1\leq p\leq g.
\]

We claim that \(L^{u-p}\otimes P\) is acyclic if and only if
\(u-p=0\). Indeed:
\begin{itemize}
\item if \(u-p>0\), then

\[
H^0\bigl(A,L^{u-p}\otimes P\bigr)\neq 0;
\]
\item if \(u-p<0\), then

\[
H^g\bigl(A,L^{u-p}\otimes P\bigr)\neq 0;
\]
\item if \(u-p=0\), then the line bundle is \(P\), which is acyclic
because \(P\) is a nontrivial element of \(\operatorname{Pic}^0(A)\).
\end{itemize}
Thus acyclicity for every \(p=1,\ldots,g\) would require

\[
u=p
\]
for every \(p=1,\ldots,g\), which is impossible because \(g\geq 2\).
Therefore no decomposable line-filtered rank-two bundle is Ulrich.

It remains to consider the indecomposable case. Before normalization,
such a bundle fits into a nonsplit exact sequence

\[
0\longrightarrow L^a\otimes P
\longrightarrow E
\longrightarrow L^a\otimes P
\longrightarrow 0,
\]
where \(a\in\mathbb Z\) and
\(P\in\operatorname{Pic}^0(A)\) is nontrivial. For
\(1\leq p\leq g\), tensoring by \(L^{-p}\) gives

\[
0\longrightarrow L^{a-p}\otimes P
\longrightarrow E\otimes L^{-p}
\longrightarrow L^{a-p}\otimes P
\longrightarrow 0.
\]

If \(a-p>0\), then

\[
H^0\bigl(A,L^{a-p}\otimes P\bigr)\neq 0.
\]
Since the left-hand term injects into \(E\otimes L^{-p}\), it follows
that

\[
H^0\bigl(A,E\otimes L^{-p}\bigr)\neq 0.
\]

If \(a-p<0\), then, because \(g\geq 2\),

\[
H^{g-1}\bigl(A,L^{a-p}\otimes P\bigr)=0
\]
and

\[
H^g\bigl(A,L^{a-p}\otimes P\bigr)\neq 0.
\]
The long exact cohomology sequence therefore contains an injection

\[
H^g\bigl(A,L^{a-p}\otimes P\bigr)
\hookrightarrow
H^g\bigl(A,E\otimes L^{-p}\bigr).
\]
Consequently,

\[
H^g\bigl(A,E\otimes L^{-p}\bigr)\neq 0.
\]

Thus \(E\otimes L^{-p}\) can be acyclic only when \(a-p=0\), that is,
only when \(p=a\). Since \(g\geq 2\), this cannot hold simultaneously
for every \(p=1,\ldots,g\). Hence \(E\) is not Ulrich.

Therefore no line-filtered rank-two vector bundle on \(A\) is Ulrich
with respect to \(L\).
\end{proof}
\section{Comparison with previous work}\label{comp}
The rank-two case on abelian surfaces has been treated by Ballico~\cite{Ballico} (for totally ACM bundles) and by Beauville~\cite{Beauville} (for Ulrich bundles). We clarify the precise relationship with these works and with the more recent classification of Yoshioka~\cite{Yoshioka}.

Ballico classifies rank-two \emph{totally} arithmetically Cohen--Macaulay bundles on an abelian surface \(A\) with \(\operatorname{Num}(A)\cong\mathbb{Z}\). On such a surface the numerical class group is generated by a single ample class, yet the notions of ordinary ACM (with respect to that generator) and totally ACM (vanishing for every line bundle of non-zero numerical class) are not equivalent. Ordinary ACM is strictly weaker: the ACM line bundles of Proposition~\ref{prop:ACM-line-bundles} already form a larger set than the numerically trivial ones that appear in Ballico’s list, and the same phenomenon persists for rank two.

Under the line-filtered hypothesis our Theorem~\ref{thm:main-classification} specialises, when \(g=2\), to a complete list of ordinary ACM rank-two bundles with empty defect. Ballico’s totally ACM bundles appear inside this list precisely as the two subclasses in which the numerical degrees vanish after normalisation:
\begin{itemize}
  \item  Types~\ref{type:II} (nonsplit self-extensions of a nontrivial \(P\in\operatorname{Pic}^0(A)\));
  \item  Types~\ref{type:I} with \(k=0\) (direct sums of two nontrivial degree-zero line bundles).
\end{itemize}
The remaining classes—Types~\ref{type:I} with \(k\geq 1\) and all of  Types~\ref{type:III}---are ordinary ACM but not totally ACM, because their direct summands have non-zero numerical degree. Thus, our theorem recovers Ballico’s classification as the numerically trivial part of a strictly larger list of ordinary ACM bundles.

Yoshioka~\cite{Yoshioka} classified the Mukai vectors of semistable ACM sheaves of arbitrary rank on a general abelian surface of Picard number one. In particular, his Proposition~2.7 treats the rank-two case without a line-filtered hypothesis: starting from the Harder--Narasimhan filtration he proves that the defect scheme is necessarily empty. Consequently, every rank-two ACM sheaf on such a surface is line-filtered, and our Theorem~\ref{thm:main-classification} gives a complete classification of ordinary ACM rank-two bundles in dimension two. Yoshioka’s methods rely on the Fourier-Mukai transform and Bridgeland stability in an essentially two-dimensional way; extending them to \(g\geq 3\) so as to remove the line-filtered hypothesis remains open.

The novelty of the present work is therefore the passage to arbitrary dimension \(g\geq 2\). In dimension \(g\geq 3\) two new phenomena appear that are invisible on surfaces. First, the numerical class of \(\det(E)\) splits the decomposable ACM bundles into two distinct families according to the parity of \(c_1(E)\) (Types~\ref{type:I} and ~\ref{type:III}. Second, the maximality argument that forces the defect to be empty on surfaces (Ballico’s key step) no longer works; we must impose the line-filtered hypothesis and leave open the classification of ACM bundles with nonempty codimension-two defect (Remark~\ref{rem:defect}).

Finally, our non-existence result for Ulrich bundles (Theorem~\ref{thm:no-line-filtered-ulrich}) should be read together with Beauville~\cite{Beauville}, who proves that every abelian surface carries a rank-two Ulrich bundle. Beauville’s construction proceeds by Serre’s correspondence applied to a set \(Z\subset C\) of \(n\) general points on a smooth curve \(C\in|\mathcal{O}_A(1)|\), yielding an exact sequence
\[
0\to\mathcal{O}_A\xrightarrow{s}E\to\mathcal{I}_Z(1)\to 0
\]
in which the zero-locus of \(s\) is exactly \(Z\). Since \(n>0\), the defect is nonempty, so the resulting bundle is not line-filtered. Theorem~\ref{thm:no-line-filtered-ulrich} therefore does not contradict~\cite{Beauville}; rather, it complements it. Any rank-two Ulrich bundle on an abelian variety of Picard rank one must arise from a nonempty codimension-two defect. In this sense the two results describe complementary halves of the same picture, and together they indicate that the nonempty-defect case is precisely the locus in which Ulrich bundles live.

\section{Acknowledgments}
The author is currently a Post Doctoral Research Fellow at the Indian Institute of Science Education and Research (IISER) Berhampur and acknowledges the institute for providing support and research facilities. 



\begin{thebibliography}{99}

\bibitem{Ballico}
E.~Ballico,
\emph{Rank 2 totally arithmetically Cohen--Macaulay vector bundles on
an abelian surface with \(\operatorname{Num}(A)\cong\mathbb Z\)},
Rend. Istit. Mat. Univ. Trieste
\textbf{40} (2008), 45--53.

\bibitem{Beauville}
A.~Beauville,
\emph{Ulrich bundles on abelian surfaces},
Proc. Amer. Math. Soc.
\textbf{144} (2016), 4601--4611.

\bibitem{BirkenhakeLange}
C.~Birkenhake and H.~Lange,
\emph{Complex Abelian Varieties},
second edition,
Grundlehren der mathematischen Wissenschaften, vol.~302,
Springer-Verlag, Berlin, 2004.

\bibitem{EisenbudSchreyer}
D.~Eisenbud and F.-O.~Schreyer,
\emph{Resultants and Chow forms via exterior syzygies},
J. Amer. Math. Soc.
\textbf{16} (2003), 537--579.

\bibitem{HartshorneStable}
R.~Hartshorne,
\emph{Stable reflexive sheaves},
Math. Ann.
\textbf{254} (1980), 121--176.

\bibitem{HuybrechtsLehn}
D.~Huybrechts and M.~Lehn,
\emph{The Geometry of Moduli Spaces of Sheaves},
second edition,
Cambridge Mathematical Library,
Cambridge University Press, Cambridge, 2010.

\bibitem{Joshi}
K.~Joshi,
\emph{A remark on Ulrich and ACM bundles},
J. Algebra
\textbf{527} (2019), 20--29.

\bibitem{Mumford}
D.~Mumford,
\emph{Abelian Varieties},
Tata Institute of Fundamental Research Studies in Mathematics,
Oxford University Press, Oxford, 1970.

\bibitem{PareschiPopa}
G.~Pareschi and M.~Popa,
\emph{Regularity on abelian varieties I},
J. Amer. Math. Soc.
\textbf{16} (2003), 285--302.

\bibitem{StacksReflexive}
The Stacks Project Authors,
\emph{The Stacks Project},
Section ``Reflexive modules,'' Tag 0AVT,
\url{https://stacks.math.columbia.edu/tag/0AVT}.
\bibitem{MondalBarik2026}
S.~Mondal and P.~Barik,
\newblock {\em Existence of ACM Bundles on Polarized Abelian Varieties},
\newblock arAiv:2606.04741 [math.AG], 2026.
\newblock Available at: \texttt{https://arxiv.org/abs/2606.04741}.

\bibitem{Yoshioka}
K.~Yoshioka,
\emph{aCM bundles on a general abelian surface},
Arch. Math. (Basel) \textbf{116} (2021), 529--539.


\end{thebibliography}
\end{document}